% ver11. 2011.03.01 Andras
% ver10: 2011.02.24. Balint
% ver9: 2011.02.11. Petra
% ver8: 2011.02.08. Petra
% ver7: 2011.02.03. Andras
% ver6: 2011.01.17. Petra  - Akkor en most megirom a cikket.
% ver5: 2010.05.28. Andras - Kivagtam inhomogent. Markusnak elkuldtem megnezni.
% ver4: 2009.08.28. Balint - konzisztencia, projekciok stb...
% ver3: 2009.08.26. Andras
% ver2: 2009.08.20. Balint
% ver1: 2009.08.06. Andras

\documentclass[draft]{birkmult}

\usepackage{amsmath,amssymb,amsthm}
\usepackage{enumerate}
\usepackage{graphics,color}
\usepackage{bbding}
\usepackage{pifont,mathrsfs}

%%%%%%%%%%%%%%%%%%%%%%%%%%%%%%%%%%%%%%%% PAGE FORMAT %%%%%%%%%%%%%%%%%%%%%%%%%%%%%%%%%%%%%5

\newtheorem{theorem}{Theorem}[section]
\newtheorem{lemma}[theorem]{Lemma}

\theoremstyle{definition}
\newtheorem{assumption}[theorem]{Assumption}
\newtheorem{remark}[theorem]{Remark}

\def\dd{\mathrm{d}}
\def\ee{\mathrm{e}}

\def\Re{\mathrm{Re}\hspace*{0.5mm}}

\def\RR{{\mathbb{R}}}
\def\NN{{\mathbb{N}}}

\def\CC{\mathbb{C}}

\def\LLL{\mathscr{L}}
\def\veps{\varepsilon}

%%%%%%%%%%%%%%%%%%%%%%%%%%%%%%%%%%%%%%%%%%%%%%%%%%%

\begin{document}

%\frenchspacing

\title[Splitting with approximations]{Operator splitting with spatial-temporal disc\-reti\-zation}

\author[A. B\'{a}tkai]{Andr\'{a}s B\'{a}tkai}
\address{E\"{o}tv\"{o}s Lor\'{a}nd University, Institute of Mathematics, H-1117 Budapest, P\'{a}zm\'{a}ny P. s\'{e}t\'{a}ny 1/C, Hungary.}
\email{batka@cs.elte.hu}
\thanks{Research partially supported by the Alexander von Humboldt-Stiftung.}

\author[P. Csom\'{o}s]{Petra Csom\'{o}s}
\address{Leopold--Franzens--Universit\"{a}t Innsbruck, Institut f\"{u}r Mathematik, Technikerstra{\ss}e 13, A-6020 Innsbruck, Austria}
\email{petra.csomos@uibk.ac.at}

\author[B. Farkas]{B\'{a}lint Farkas}
\address{Technische Universit\"{a}t Darmstadt, Fachbereich Mathematik, Schlo{\ss}gartenstra{\ss}e 7, D-64289 Darmstadt, Germany}
\email{farkas@mathematik.tu-darmstadt.de}

\author[G. Nickel]{Gregor Nickel}
\address{Universit\"at Siegen, FB 6 Mathematik, Walter-Flex-Stra{\ss}e 3, D-57068 Siegen, Germany.}
\email{nickel@mathematik.uni-siegen.de}

\subjclass{47D06, 47N40, 65J10, 34K06}
%\keywords{Operator splitting, Trotter product formula, spatial approximation, $C_0$-semigroups, delay equation}

\date\today

\begin{abstract}
Continuing earlier investigations, we analyze the convergence of operator splitting procedures combined with spatial discretization and rational approximations.
\end{abstract}
\maketitle

%%%%%%%%%%%%%%%%%%%%%%%%%%%%%%%%%%%%%%%%%%%%

\section{Introduction}

Operator splitting procedures are special finite difference methods one uses to solve partial differential equations numerically. They are certain time-discretization methods which simplify or even make the numerical treatment of differential equations possible.

The idea is the following. Usually, a certain physical phenomenon is the combined effect of several processes. The behaviour of a physical quantity is described by a partial differential equation in which the local time derivative depends on the sum of the sub-operators corresponding to the different processes. These sub-operators  are usually of different nature. For each sub-problem corresponding to each sub-operator there might be a fast numerical method providing accurate solutions. For the sum of these sub-operators, however, we usually cannot find an adequate method. Hence, application of operator splitting procedures means that instead of the sum we treat the sub-operators separately. The solution of the original problem is then obtained from the numerical solutions of the sub-problems. For a more detailed introduction and further references, see the monographs by Hairer et al. \cite{HLW}, Farag\'{o} and Havasi \cite{Farago-Havasi_book}, Holden et al. \cite{Holden-Karlsen-Lie-Risebro}, or Hunsdorfer and Verwer \cite{Hundsdorfer-Verwer}.

Since operator splittings are time-discratization methods, the analysis of their convergence plays an important role. In our earlier investigations in B\'{a}tkai, Csom\'{o}s, Nickel \cite{Batkai-Csomos-Nickel} we achieved theoretical convergence analysis of problems when operator splittings were applied together with some spatial approximation scheme. In the present paper we additionally treat temporal discretization methods of special form. Since rational approximations often occur in practice (consider e.g. Euler and Runge\,--\,Kutta methods, or any linear multistep method), we will concentrate on them. Let us start by setting the abstract stage. 

\begin{assumption}\label{gen_ass_abc}
Assume that $X$ is a Banach space, $A$ and $B$ are closed, densely defined linear operators generating the strongly continuous operator semigroups $(T(t))_{t\ge 0}$ and $(S(t))_{t\ge 0}$, respectively. Further, we suppose that the closure $\overline{A+B}$ of $A+B$ with $D(\overline{A+B})\supset D(A)\cap D(B)$ is also the generator of a strongly continuous semigroup $(U(t))_{t\ge 0}$.
\label{as:genass}
\end{assumption}

For the terminology and notations about strongly continuous operator semigroups see the monographs by Arendt et al. \cite{ar-ba-hi-ne} or Engel and Nagel \cite{Engel-Nagel}.
Then we consider the following abstract Cauchy problem

\begin{equation}
\left\{
\begin{aligned}
\frac{\dd u(t)}{\dd t}&=(A+B)u(t), \qquad t\ge 0, \\
u(0)&=x\in X.
\label{acpspl}
\end{aligned}
\right.
\end{equation}

For the different splitting procedures the \emph{exact split} solution of problem \eqref{acpspl} at time $t\ge 0$ is given by
\begin{align*}
u^\text{sq}_n(t)&:=[S(t/n)T(t/n)]^nx \quad \text{(sequential)},\\
u^\text{St}_n(t)&:=[T(t/2n)S(t/n)T(t/2n)]^nx \quad \text{(Strang)},\\
u^\text{w}_n(t)&:=[\Theta S(t/n)T(t/n)+(1-\Theta)T(t/n)S(t/n)]^n x \text{ with } \Theta\in(0,1) \quad \text{(weighted)}.
\end{align*}

%%%%%%%%%%%%%%%%%%%%%%%%%%%%%%%%%%%%%%%%%%%%

\medskip \noindent However, in practice, we obtain the numerical solution of the problem \eqref{acpspl} by
\renewcommand{\labelitemi}{{\ding{100}}}
\begin{itemize}
\item applying a \emph{splitting procedure} with operator $A$ and $B$,
\item defining a mesh on which the split problems should be \emph{discretized in space}, and
\item using a certain \emph{temporal approximation} to solve these (semi-)discretized equations.
\end{itemize}
Thus, the properties of this complex numerical scheme should be investigated. In order to work in an abstract framework, we introduce the following spaces and operators, see Ito and Kappel \cite[Chapter 4]{Ito-Kappel}.
\begin{assumption}\label{ass:JmPm}
Let $X_m$, $m\in\mathbb{N}$ be Banach spaces and take operators
\begin{equation}
P_m:\ X\rightarrow X_m \qquad \mbox{and} \qquad J_m:\ X_m\rightarrow X
\nonumber
\end{equation}
having the following properties:
\renewcommand{\labelenumi}{(\roman{enumi})}
\begin{enumerate}
\item $P_mJ_m=I_m$ for all $m\in\mathbb{N}$, where $I_m$ is the identity operator in $X_m$,
\item $\lim\limits_{m\rightarrow\infty}J_mP_mx=x$ for all $x\in X$,
\item $\|J_m\|\le K$ and $\|P_m\|\le K$ for all $m\in\mathbb{N}$ and some given constant $K>0$.
\end{enumerate}
\end{assumption}
The operators $P_m$, $m\in\mathbb{N}$ are usually some projections onto the spatial ``mesh'' $X_m$, while the operators $J_m$ correspond to the interpolation method resulting the solution in the space $X$, but also Fourier--Glerkin methods fit in this
framework.

Let us recall the following definitions and results from \cite{Batkai-Csomos-Nickel}.
 First we assume that the exact solution $u$ of problem \eqref{acpspl} is obtained by using only a splitting procedure and discretization in space.
\begin{assumption}\label{ass:StabCons}
Let $(A_m,D(A_m))$ and $(B_m,D(B_m))$, $m\in\mathbb{N}$, be operators on $X_m$ and let $(A,D(A))$ and $(B,D(B))$ be operators on $X$, that satisfy the following:
\renewcommand{\labelenumi}{(\roman{enumi})}
\renewcommand{\labelenumii}{(\alph{enumii})}
\begin{enumerate}
\item \emph{Stability:} \\
Assume that there exist constants $M \ge 0$ such that
\begin{enumerate}
\item $\|\Re\lambda R(\lambda,A_m)\|\le M$ and $\|\Re\lambda R(\lambda,A)\|\le M$,
\item $\|\Re\lambda R(\lambda,B_m)\|\le M$ and $\|\Re\lambda R(\lambda,B)\|\le M$
\end{enumerate} for all $\Re\lambda>0$.
\item \emph{Consistency:}\label{ass:consistency}
Assume that $P_m D(A)\subset D(A_m)$, $P_m D(B)\subset D(B_m)$, and
\begin{enumerate}
\item $\lim\limits_{m\to\infty}J_mA_mP_mx=Ax$ \qquad for all $x\in D(A)$,
\item $\lim\limits_{m\to\infty}J_mB_mP_mx=Bx$ \qquad for all $x\in D(B)$.
\end{enumerate}
\end{enumerate}

%
%\begin{assumption}\label{ass:StabCons}
%Let $(T_m(t))_{t\ge 0}$, $(S_m(t))_{t\ge 0}$, $m\in\mathbb{N}$ be semigroups with generators $(A_m,D(A_m))$ and $(B_m,D(B_m))$, $m\in\mathbb{N}$, respectively.
%\renewcommand{\labelenumi}{(\roman{enumi})}
%\renewcommand{\labelenumii}{(\alph{enumii})}
%\begin{enumerate}
%\item \emph{Stability:} \\
%Assume that there exist constants $M \ge 1$ and $\omega \in\RR$ such that
%\begin{enumerate}
%\item $\|T(h)\|\le M\mathrm{e}^{\omega h}$ \quad and \quad $\|T_m(h)\|\le M\mathrm{e}^{\omega h}$,
%\item $\|S(h)\|\le M\mathrm{e}^{\omega h}$ \quad and \quad $\|S_m(h)\|\le M\mathrm{e}^{\omega h}$,
%\end{enumerate} \medskip for all $h>0$.
%\item \emph{Consistency:}\label{ass:consistency}
%Assume that $P_m D(A)\subset D(A_m)$, $P_m D(B)\subset D(B_m)$, and
%\begin{enumerate}
%\item $\lim\limits_{m\to\infty}J_mA_mP_mx=Ax$ \qquad for all $x\in D(A)$,
%\item $\lim\limits_{m\to\infty}J_mB_mP_mx=Bx$ \qquad for all $x\in D(B)$.
%\end{enumerate}
%\end{enumerate}
%%
%We call the semigroups $T_m$, $S_m$, $m\in\mathbb{N}$ \emph{approximating semigroups}, and their generators $(A_m,D(A_m))$, $(B_m,D(B_m))$ \emph{approximating generators} if they possess the above properties.
\end{assumption}
\begin{remark}\label{rem:conv}
\begin{enumerate}[a)]
\item If Assumption \ref{ass:StabCons} is satisfied for $M=1$, then by the Hille--Yosida Theorem $A,B,A_m,B_m$ are all generators of contraction semigroups  $(T(t))_{t\geq 0}$, $(S(t))_{t\geq 0}$, $(T_m(t))_{t\geq 0}$, $(S_m(t))_{t\geq 0}$. Furthermore, from the Trotter\,--\,Kato Approximation Theorem (see Ito and Kappel \cite[Theorem 2.1]{Ito-Kappel2}) it follows that the approximating semigroups converge to the original semigroups locally uniformly, that is: \\
\noindent \emph{Convergence:}
\renewcommand{\labelenumi}{(\alph{enumi})}
\begin{enumerate}
\item $\lim\limits_{m\to\infty}J_mT_m(h)P_mx=T(h)x$ \qquad $\forall x\in X$ and uniformly for $h\in[0,t_0]$,
\item $\lim\limits_{m\to\infty}J_mS_m(h)P_mx=S(h)x$ \qquad $\forall x\in X$ and uniformly for $h\in[0,t_0]$
\end{enumerate}
for any $t_0\geq 0$.
\item In turn, the resolvent estimates are  satisfied if $A,B,A_m,B_m$ are all generators of  bounded semigroups, with the same bound for all $m\in\NN$.
 One may even assume that these semigroups have the same exponential estimate, that is
 $$
 \|T_m(t)\|,\:\|T(t)\|,\:\|S_m(t)\|,\|S(t)\|\leq M\ee^{\omega t}\quad\mbox{for all $t\geq0 $}.
 $$
 In the following this would result in a simple rescaling that we want to spare for the sake of brevity.
\end{enumerate}
\end{remark}

In order to prove the convergence of the \emph{splitting} procedures in this case, we need to formulate a modified version of Chernoff's Theorem being valid also for the spatial discretizations.  Our main technical tool will be the following theorem, whose proof can be carried out along the same lines as Theorem 3.12 in \cite{Batkai-Csomos-Nickel}. Let us agree on the following terminology. We say that for a sequence $a_{m,n}$ the limit $$\lim\limits_{m,n\to\infty}a_{m,n}=:a$$ exists if for all $\varepsilon>0$ there exists $N\in\mathbb{N}$ such that for all $n,m\ge N$ we have $\left\|a_{m,n}-a\right\|\le\varepsilon$.

\begin{theorem}[Modified Chernoff--Theorem, {\cite[Theorem 3.12]{Batkai-Csomos-Nickel}}]\label{thm:mod}
Consider  a sequence of functions $F_m:\RR^+\rightarrow\LLL(X_m)$, $m\in\mathbb{N}$, satisfying
\begin{equation}
F_m(0)=I_m \qquad \mbox{for all } m\in\mathbb{N},
\label{chernoff2-1}
\end{equation}
and that there exist constants $M\ge 1$, $\omega\in\RR$, such that
\begin{equation}
\|[F_m(t)]^k\|_{\LLL(X_m)}\le M\mathrm{e}^{k\omega t} \qquad \mbox{for all } t\ge 0,\ m,k\in\mathbb{N}.
\label{chernoff2-2}
\end{equation}
Assume further that
\begin{equation}
\exists \lim\limits_{h\rightarrow 0}\dfrac{J_mF_m(h)P_mx-J_m P_m x}{h}
\label{chernoff2-tfh}
\end{equation}
uniformly in $m\in\NN$, and that
\begin{equation}
Gx:= \lim\limits_{m\rightarrow\infty}\lim\limits_{h\to 0}\dfrac{J_mF_m(h)P_mx-J_m P_m x}{h}
\label{chernoff2-3}
\end{equation}
exists for all $x\in D\subset X$, where $D$ and $(\lambda_0-G)D$ are dense subspaces in $X$ for $\lambda_0>0$. Then the closure $\overline G$ of $G$ generates a bounded strongly continuous semigroup $\left( U(t) \right)_{t\ge 0}$, which is given by
\begin{equation}
U(t)x=\lim\limits_{m,n\rightarrow\infty} J_m[F_m(\tfrac{t}{n})]^n P_m x
\label{convergence2}
\end{equation}
for all $x\in X$ uniformly for $t$ in compact intervals.
\label{thm:chernoff2}
\end{theorem}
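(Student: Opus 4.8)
The plan is to combine a Trotter--Kato approximation step (in the form involving the connecting operators $J_m,P_m$, as in \cite{Ito-Kappel2} and in the proof of \cite[Theorem~3.12]{Batkai-Csomos-Nickel}) with the classical Chernoff estimate, and then to extend the resulting convergence from $D$ to all of $X$ by density. First, for $m\in\NN$ and $h>0$ I set $G_{m,h}:=\tfrac1h\bigl(F_m(h)-I_m\bigr)\in\LLL(X_m)$; being bounded, it generates the uniformly continuous semigroup $(\ee^{tG_{m,h}})_{t\ge0}$ on $X_m$, and from \eqref{chernoff2-2} together with the exponential series,
\[
\bigl\|\ee^{tG_{m,h}}\bigr\|_{\LLL(X_m)}=\ee^{-t/h}\Bigl\|\sum_{k\ge0}\tfrac{(t/h)^k}{k!}F_m(h)^k\Bigr\|\le M\,\ee^{(t/h)(\ee^{\omega h}-1)}\le M\,\ee^{\omega' t}
\]
for all $t\ge0$, all $m\in\NN$, and all $h$ in a fixed interval $(0,h_0]$, with $\omega'$ depending only on $\omega$. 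I would also record the elementary identities $\bigl(J_m F_m(h)P_m\bigr)^n=J_m F_m(h)^n P_m$ and $\bigl(J_m\ee^{G_{m,h}}P_m\bigr)^n=J_m\ee^{nG_{m,h}}P_m$, which follow from $P_mJ_m=I_m$ and say that conjugation by $J_m$ and $P_m$ is compatible with taking powers.

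Next I would reformulate \eqref{chernoff2-tfh}--\eqref{chernoff2-3} as needed for Trotter--Kato. Since $\|y\|_{X_m}=\|P_mJ_my\|\le K\|J_my\|$, the operators $J_m$ are uniformly bounded below, so existence of the limit in \eqref{chernoff2-tfh} forces $\lim_{h\to0}\tfrac1h(F_m(h)-I_m)P_mx$ to exist in $X_m$ for every $x\in D$. Combining the uniformity in $m$ in \eqref{chernoff2-tfh} with the outer limit in \eqref{chernoff2-3}, one obtains a genuine double limit $J_mG_{m,h}P_mx\to Gx$ as $m\to\infty$, $h\to0$, for all $x\in D$, together with the bound $\|G_{m,h}P_mx\|_{X_m}\le K\|J_mG_{m,h}P_mx\|$, which is bounded uniformly in $m$ and $h\in(0,h_0]$. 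With the uniform stability bound of the first step and the density of $D$ and $(\lambda_0-G)D$, the Trotter--Kato theorem with connecting operators then applies to the family $(\ee^{tG_{m,h}})$ and yields that $\overline G$ generates a strongly continuous semigroup $(U(t))_{t\ge0}$, whose growth is governed by \eqref{chernoff2-2}, and that
\[
J_m\ee^{\,n(F_m(t/n)-I_m)}P_mx=J_m\ee^{tG_{m,t/n}}P_mx\longrightarrow U(t)x\qquad(m,n\to\infty)
\]
uniformly for $t$ in compact intervals.

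It then remains to replace $\ee^{\,n(F_m(t/n)-I_m)}$ by $F_m(t/n)^n$. For this I would invoke the Chernoff estimate underlying \cite[Theorem~3.12]{Batkai-Csomos-Nickel}: if $V$ is a bounded operator on a Banach space with $\|V^k\|\le M\ee^{\omega kh}$ for all $k$ and $nh\le T$, then $\|(V^n-\ee^{\,n(V-I)})y\|\le C(M,\omega,T)\sqrt n\,\|(V-I)y\|$. Applied with $V=F_m(t/n)$ on $X_m$ and $y=P_mx$, and using $\|(F_m(t/n)-I_m)P_mx\|_{X_m}=\tfrac tn\|G_{m,t/n}P_mx\|_{X_m}\le\tfrac tn\,c_x$ with $c_x:=\sup_{m,\,h\le h_0}\|G_{m,h}P_mx\|_{X_m}<\infty$ from the previous paragraph, this gives for $x\in D$
\[
\bigl\|J_m\bigl(F_m(t/n)^n-\ee^{\,n(F_m(t/n)-I_m)}\bigr)P_mx\bigr\|\le\frac{C'(M,\omega,K,T)\,c_x}{\sqrt n}
\]
uniformly in $m$ and in $t\in[0,T]$, and the right-hand side tends to $0$ as $n\to\infty$; combined with the previous step this proves \eqref{convergence2} for $x\in D$. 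Finally, $\|J_mF_m(t/n)^nP_m\|\le K^2M\ee^{\omega t}$ by \eqref{chernoff2-2} and $\|U(t)\|$ is locally bounded, so a standard $\tfrac{\veps}{3}$-argument based on the density of $D$ extends \eqref{convergence2} to all $x\in X$, locally uniformly in $t$. The main obstacle is the Trotter--Kato step --- establishing that the doubly indexed family $(\ee^{tG_{m,h}})$ converges to a $C_0$-semigroup and identifying its generator as $\overline G$; this is precisely where the density of $(\lambda_0-G)D$ enters, guaranteeing dense range of the limiting pseudoresolvent. The Chernoff estimate and the density extension are then routine.
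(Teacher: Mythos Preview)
The paper does not actually prove this theorem: it is quoted from \cite[Theorem~3.12]{Batkai-Csomos-Nickel} with the remark that ``the proof can be carried out along the same lines.'' Your plan---defining $G_{m,h}=\tfrac1h(F_m(h)-I_m)$, deriving the uniform stability bound for $\ee^{tG_{m,h}}$ from \eqref{chernoff2-2}, applying Trotter--Kato with connecting operators $J_m,P_m$ (using the density of $D$ and $(\lambda_0-G)D$ for the range condition), then invoking the classical Chernoff $\sqrt n$-estimate to pass from $\ee^{\,n(F_m(t/n)-I_m)}$ to $F_m(t/n)^n$, and finally extending from $D$ to $X$ by density---is exactly the standard route that underlies \cite[Theorem~3.12]{Batkai-Csomos-Nickel} and the classical Chernoff theorem in \cite[Chapter~III.5]{Engel-Nagel}; it is correct, and your identification of the Trotter--Kato step for the doubly indexed family as the main point requiring care is accurate.
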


%%%%%%%%%%%%%%%%%%%%%%%%%%%%%%%%%%%%%%%%%%%%

\section{Rational approximations}
Our aim is to show the convergence of various splitting methods when combined with both spatial and temporal disretization.
As temporal discretizations we consider finite difference methods, or more precisely rational approximations of the exponential function. Throughout this section, we suppose that $r$ and $q$ will be rational functions \textbf{approximating the exponential function} at least of order one, that is we suppose
$$r(0)=r'(0)=1\quad\mbox{and}\quad q(0)=q'(0)=1.$$
Further, we suppose that these functions are bounded on the closed left half-plane
$$\CC_-:=\bigl\{z\in \CC:\Re z\leq 0\bigr\}.$$
Rational (e.g.~$A$-stable) functions typically appearing in numerical analysis satisfy these conditions. An important consequence of the boundedness in the closed left half-plane is that the poles of $r$ have strictly positive real part, and thus lie in some sector
$$
\Sigma_{\theta}:=\bigl\{z:z\in \CC,\: |\arg (z)|<\theta\bigr\}
$$
of opening half-angle $\theta\in [0,\frac\pi2)$.

\medskip It is clear that for an application of the Modified Chernoff--Theorem \ref{thm:mod} uniform convergence (w.r.t.~$m$ or $h$) plays a crucial role here (cf.~ \cite{Batkai-Csomos-Nickel}).  Hence, the following lemma will be the main technical tool in our investigations.
\begin{lemma}\label{lem:key}
Let  $A,A_m,P_m,J_m$ be as in Assumptions \ref{ass:JmPm} and \ref{ass:StabCons}.
Let  $r$ be a rational approximation of the exponential being bounded on the closed left half-plane $\CC_-$.
%%%such that the poles of $r$ lie all in the sector $\Sigma_{\varphi}$, $\varphi\in [0,\pi/2)$. Suppose that for some $M\geq 0$, $\alpha>0$ the operators $A_m$ satisfy
%%%%$$
%%%%\|\lambda R(\lambda,A_m)\|\leq M\quad\mbox{for all $\lambda\in \Sigma_\varphi$, $\Re\lambda\geq\alpha$ for all $m\in \NN$}.
%%%%$$
Then  for all $x\in D(A)$ we have
\begin{equation}\label{eq:elso}
\left\| \frac{J_m r(hA_m)P_mx-J_mP_mx}{h}-J_mA_mP_mx\right\| \to 0
\end{equation}
uniformly in $m\in\NN$ for $h\to 0$ .
\end{lemma}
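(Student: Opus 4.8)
\emph{Strategy.} The whole statement reduces to one uniform resolvent--approximation estimate, which I prove last: for every $\mu\in\Sigma_\theta$ and every family $(v_m)_m$, $v_m\in X_m$, for which $J_mv_m$ converges in $X$,
\[
\Bigl\|\bigl(\tfrac{\mu}{h}R(\tfrac{\mu}{h},A_m)-I_m\bigr)v_m\Bigr\|\longrightarrow 0\quad\text{as }h\to0^+,\ \text{uniformly in }m.
\]
To reduce the lemma to this, I first record an algebraic identity. Since $r$ is bounded on $\CC_-$ it has a finite value $r_\infty$ at infinity and finitely many poles $\mu_1,\dots,\mu_N\in\Sigma_\theta$, hence a partial fraction expansion $r(z)=r_\infty+\sum_{j=1}^{N}\sum_{l=1}^{m_j}c_{j,l}(\mu_j-z)^{-l}$. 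For $h>0$ the points $\mu_j/h$ lie in $\{\Re\lambda>0\}\subset\rho(A_m)$, so $r(hA_m)=r_\infty I_m+\sum_{j,l}c_{j,l}h^{-l}R(\mu_j/h,A_m)^l$ is well defined. Writing $y_m:=P_mx\in D(A_m)$ (permitted by the consistency part of Assumption~\ref{ass:StabCons}), I repeatedly apply the resolvent identity $\lambda R(\lambda,A_m)y=y+R(\lambda,A_m)A_my$, valid for $y\in D(A_m)$, to pull one factor $A_m$ out of each resolvent power, and then collapse the scalar coefficients using $r(0)=1$ and $\sum_{j,l}l\,c_{j,l}\mu_j^{-l-1}=r'(0)=1$. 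This yields the identity
\[
\frac{r(hA_m)y_m-y_m}{h}-A_my_m=\sum_{j=1}^{N}\sum_{l=1}^{m_j}\sum_{i=1}^{l}c_{j,l}\,\mu_j^{-l-1}\bigl(U_{m,j}^{\,l-i+1}-I_m\bigr)A_mP_mx,\qquad U_{m,j}:=\tfrac{\mu_j}{h}R\bigl(\tfrac{\mu_j}{h},A_m\bigr).
\]

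From this the reduction is immediate. The stability estimate $\|\Re\lambda\,R(\lambda,A_m)\|\le M$ together with $\mu_j/h\in\Sigma_\theta$ give $\|U_{m,j}\|\le M/\cos\theta=:L$ uniformly in $m$ and $h>0$; writing $U^p-I_m=\sum_{k=0}^{p-1}U^k(U-I_m)$ then yields $\|(U_{m,j}^{\,p}-I_m)v\|\le C\,\|(U_{m,j}-I_m)v\|$ with a constant $C$ depending only on $r$ and $M$. Applying $J_m$ to the displayed identity, using $\|J_m\|\le K$ and the finiteness of the triple sum, the lemma follows as soon as $\|(U_{m,j}-I_m)A_mP_mx\|\to0$ as $h\to0$, uniformly in $m$. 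But $v_m:=A_mP_mx$ is an admissible family for the estimate above, because $J_mA_mP_mx\to Ax$ by the consistency part of Assumption~\ref{ass:StabCons}, and $U_{m,j}$ is exactly the operator occurring there with $\mu=\mu_j$.

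\emph{Proof of the single estimate.} Put $\lambda:=\mu/h$, so $\lambda\to\infty$ inside $\Sigma_\theta$ as $h\to0^+$ and $\|\lambda R(\lambda,A_m)\|\le L$, and set $w:=\lim_mJ_mv_m$. Given $\varepsilon>0$, pick $w_\varepsilon\in D(A)$ with $\|w-w_\varepsilon\|\le\varepsilon$ and split $(\lambda R(\lambda,A_m)-I_m)v_m$ as
\[
(\lambda R(\lambda,A_m)-I_m)(v_m-P_mw)+(\lambda R(\lambda,A_m)-I_m)(P_mw-P_mw_\varepsilon)+(\lambda R(\lambda,A_m)-I_m)P_mw_\varepsilon.
\]
The first term has norm $\le(L+1)\|P_m(J_mv_m-w)\|\le(L+1)K\|J_mv_m-w\|$, a bound independent of $h$ that tends to $0$ as $m\to\infty$; the second has norm $\le(L+1)K\varepsilon$; and since $P_mw_\varepsilon\in D(A_m)$, the resolvent identity gives $(\lambda R(\lambda,A_m)-I_m)P_mw_\varepsilon=R(\lambda,A_m)A_mP_mw_\varepsilon$, of norm $\le\tfrac{M}{\Re\lambda}\|A_mP_mw_\varepsilon\|=\tfrac{Mh}{\Re\mu}\|A_mP_mw_\varepsilon\|$, where $\sup_m\|A_mP_mw_\varepsilon\|<\infty$ because $J_mA_mP_mw_\varepsilon\to Aw_\varepsilon$ is convergent and $\|A_mP_mw_\varepsilon\|=\|P_mJ_mA_mP_mw_\varepsilon\|\le K\sup_m\|J_mA_mP_mw_\varepsilon\|$. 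Hence there is $m_0$ such that $\|(\lambda R(\lambda,A_m)-I_m)v_m\|\le 2(L+1)K\varepsilon+\tfrac{Mh}{\Re\mu}\|A_mP_mw_\varepsilon\|$ for all $m\ge m_0$ and all $h>0$, while for each of the finitely many $m<m_0$ one has $\lambda R(\lambda,A_m)v_m\to v_m$ as $\lambda\to\infty$ in $\Sigma_\theta$, since $A_m$ generates a bounded strongly continuous semigroup (Remark~\ref{rem:conv}). Letting $h\to0$ and then $\varepsilon\to0$ completes the proof.

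\emph{Main obstacle.} The hard part is precisely this last estimate. Since $x$ is only assumed to lie in $D(A)$, one cannot apply a further resolvent identity to $A_mP_mx$ itself — $P_mx$ need not lie in $D(A_m^2)$ — to generate the decisive factor $h=\mu/\lambda$. The remedy is to replace $A_mP_mx$ first by $P_m(Ax)$, using $\|A_mP_mx-P_m(Ax)\|\le K\|J_mA_mP_mx-Ax\|\to0$, and then by $P_mw_\varepsilon$ with $w_\varepsilon\in D(A)$, for which $P_mw_\varepsilon\in D(A_m)$ and $\sup_m\|A_mP_mw_\varepsilon\|<\infty$; the price is that the small indices $m$, where the first replacement is not yet close, must be handled separately. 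This is the step where the consistency of the operators $A_m$ and the mapping properties of $J_m,P_m$ from Assumption~\ref{ass:JmPm} are used in an essential way.
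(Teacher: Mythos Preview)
Your proof is correct and follows essentially the same route as the paper's: partial-fraction decomposition of $r$, the telescoping identity $U^l-I=\sum U^k(U-I)$ combined with $(U_{m,j}-I_m)y_m=\tfrac{h}{\mu_j}U_{m,j}A_my_m$, and reduction to the single uniform estimate $\bigl\|(\lambda R(\lambda,A_m)-I_m)A_mP_mx\bigr\|\to0$. The paper organizes this into three steps (backward Euler $r(z)=(1-z)^{-1}$, then $(1-z/k)^{-k}$, then general $r$), whereas you go straight to the partial-fraction form; the algebra is the same.

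The one point where the arguments genuinely differ is the proof of the uniform resolvent estimate itself. The paper first proves $\lambda J_mR(\lambda,A_m)P_my\to J_mP_my$ uniformly in $m$ for every \emph{fixed} $y\in X$, and then exploits that $\{J_mA_mP_mx:m\in\NN\}\cup\{Ax\}$ is compact (being a convergent sequence together with its limit): covering it by finitely many $\varepsilon$-balls centred at $J_iA_iP_ix$, $i\le N$, yields uniformity over all $m$ at once. You instead split $v_m=A_mP_mx$ as $P_m(J_mv_m-Ax)+P_m(Ax-w_\varepsilon)+P_mw_\varepsilon$ and treat the finitely many small $m$ individually. Both are valid; the paper's compactness argument avoids the separate treatment of small $m$, while your argument is slightly more hands-on and makes the role of the approximation $w_\varepsilon\in D(A)$ explicit. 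One small remark: your appeal to Remark~\ref{rem:conv} for the finitely many $m<m_0$ is unnecessary---strong convergence $\lambda R(\lambda,A_m)v\to v$ along $\Sigma_\theta$ follows directly from the resolvent bound and the density of $D(A_m)\supset P_mD(A)$ in $X_m$, without invoking any semigroup.
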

The proof of this lemma is postponed to the end of this section. With its help, however, one can prove
the next results: (1) on convergence of spatial-temporal discretization without splitting, (2) on convergence of the splitting procedures combined with spatial and temporal approximations.
%%%\begin{lemma}
%%%Suppose also that the poles of $r$ lie all in the sector
%%%, and that
%%%$$
%%%\|\lambda R(\lambda,A_m)\|,\:\|\lambda R(\lambda,A)\|\leq M\quad \mbox{for all $m\in \NN$}
%%%$$
%%% and $\lambda\in\Sigma_{\varphi}$ with $\Re\lambda$ sufficiently large.
%%%\end{lemma}
\begin{theorem}\label{thm:nosplit}
Let $A,A_m,P_m,J_m$ be as in Assumptions \ref{ass:JmPm} and \ref{ass:StabCons} and let  $A$ generate the semigroup $(T(t))_{t\geq 0}$. Suppose that $r$ is a rational function approximating the exponential function bounded on $\CC_-$, and that there exist constants $M\ge 1$ and $\omega\in\RR$ with
\begin{equation}\label{ass:rstab}
\|[r(hA_m)]^k\|\le M\ee^{k\omega h} \quad \text{ for all } h\ge 0,\ k,m\in\mathbb{N}.
\end{equation}
Then
\begin{equation*}
\lim_{m,n\to\infty} J_m r(\tfrac{t}{n}A_m)^nP_m x = T(t)x,
\end{equation*}
uniformly for $t\geq 0$ in compact intervals.
\end{theorem}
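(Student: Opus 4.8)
The plan is to apply the Modified Chernoff Theorem~\ref{thm:mod} with the choice $F_m(h):=r(hA_m)$, $m\in\NN$, $h\geq 0$. First I would check that this is an admissible choice, i.e.\ that $F_m\colon\RR^+\to\LLL(X_m)$. Since $r$ is bounded on $\CC_-$, it has no poles there and possesses a finite limit at infinity, so it is the sum of a constant and a strictly proper rational function whose poles all lie in the open right half-plane. Because the resolvent bound in Assumption~\ref{ass:StabCons} places the open right half-plane in $\rho(A_m)$, the operator $r(hA_m)$ is then well defined and bounded on $X_m$ for every $h>0$, while for $h=0$ one sets $r(0\cdot A_m):=r(0)I_m=I_m$, so that \eqref{chernoff2-1} holds. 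Condition \eqref{chernoff2-2} is nothing but the stability hypothesis \eqref{ass:rstab}.

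Next I would verify the two infinitesimal conditions \eqref{chernoff2-tfh} and \eqref{chernoff2-3}, taking $D:=D(A)$. For $x\in D(A)$, Lemma~\ref{lem:key} asserts precisely that
\[
\frac{J_mr(hA_m)P_mx-J_mP_mx}{h}\longrightarrow J_mA_mP_mx\qquad(h\to 0)
\]
uniformly in $m\in\NN$. In particular the inner limit in \eqref{chernoff2-tfh} exists for each $m$, equals $J_mA_mP_mx$, and is attained uniformly in $m$ — this is exactly \eqref{chernoff2-tfh}. Letting then $m\to\infty$ and invoking the consistency statement in Assumption~\ref{ass:StabCons}, one obtains $\lim_{m\to\infty}J_mA_mP_mx=Ax$; hence the operator $G$ of \eqref{chernoff2-3} is defined on all of $D=D(A)$ and satisfies $Gx=Ax$, i.e.\ $G=A$.

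Since $A$ is densely defined, $D=D(A)$ is dense in $X$; and since the resolvent estimate in Assumption~\ref{ass:StabCons} puts every $\lambda_0>0$ in $\rho(A)$, we get $(\lambda_0-G)D=(\lambda_0-A)D(A)=X$, which is dense as well. All hypotheses of Theorem~\ref{thm:mod} being satisfied, it follows that $\overline G$ generates a strongly continuous semigroup represented by $\lim_{m,n\to\infty}J_m[r(\tfrac tnA_m)]^nP_mx$, uniformly for $t$ in compact intervals. But $A$ is closed, so $\overline G=\overline A=A$, and $A$ already generates $(T(t))_{t\geq0}$; by uniqueness of the generator the semigroup produced by Theorem~\ref{thm:mod} is $(T(t))_{t\geq0}$, which yields the asserted convergence to $T(t)x$.

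The main obstacle I expect is not in this deduction — which is essentially bookkeeping once Lemma~\ref{lem:key} is at hand — but in the lemma itself: one needs the difference quotients $h^{-1}(J_mr(hA_m)P_mx-J_mP_mx)$ to converge to $J_mA_mP_mx$ \emph{uniformly in $m$} as $h\to0$, and it is precisely this uniformity that legitimizes interchanging the limits $h\to0$ and $m\to\infty$ in the sense demanded by \eqref{chernoff2-3}. (If the exponential bound $\omega\neq0$ is genuinely present in \eqref{ass:rstab}, one first performs the harmless rescaling mentioned in Remark~\ref{rem:conv} and then argues as above.)
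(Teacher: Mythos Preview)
Your proposal is correct and follows essentially the same route as the paper: apply the Modified Chernoff Theorem~\ref{thm:mod} with $F_m(h)=r(hA_m)$, read off \eqref{chernoff2-1}--\eqref{chernoff2-2} from $r(0)=1$ and the stability assumption \eqref{ass:rstab}, and invoke Lemma~\ref{lem:key} for the uniform consistency \eqref{chernoff2-tfh}--\eqref{chernoff2-3}. The paper's proof is terser and leaves the identification $G=A$ and the density checks implicit, but your added detail (well-definedness of $r(hA_m)$, $(\lambda_0-A)D(A)=X$, $\overline G=A$) is all sound and does not depart from the intended argument.
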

\begin{proof}
We will apply Theorem \ref{thm:chernoff2} with $F_m(h)=r(hA_m)$. The stability criteria \eqref{chernoff2-1}-\eqref{chernoff2-2} follow directly from $r(0)=1$ and assumption \eqref{ass:rstab}. For the consistency \eqref{chernoff2-3} we have to show the existence of the limit in \eqref{chernoff2-tfh} uniformly in $m\in\NN$. But this is exactly the statement of Lemma \ref{lem:key}.
\end{proof}

Here is the announced theorem on the convergence of the sequential splitting with spatial and rational temporal discretization.
\begin{theorem}\label{thm:sq_conv}
Let $A,B,A_m,B_m,P_m,J_m$ be as in  Assumption \ref{gen_ass_abc}, Assumptions \ref{ass:JmPm} and \ref{ass:StabCons}, and let $(U(t))_{t\geq 0}$ denote the semigroup generated by the closure of $A+B$. Suppose that the following stability condition is satisfied:
\begin{equation}
\bigl\|[ q(h B_m)r(h A_m)]^k\bigr\|\le M\mathrm{e}^{kh\omega} \quad \mbox{for all } h\ge 0,\ k,m\in\mathbb{N}.
\label{stab_approx}
\end{equation}%
%Let $r,q$ have poles in $\Sigma_{\varphi}$, $\varphi\in [0,\pi/2)$ and suppose that
%$$
%\|\lambda R(\lambda,A_m)\|,\:\|\lambda R(\lambda,A)\|\leq M\quad \mbox{for all $m\in \NN$}
%$$
% and $\lambda\in\Sigma_{\varphi}$ with $\Re\lambda$ sufficiently large and the same for $B_m,B$.
Then the sequential splitting is convergent, i.e.,
\begin{equation*}
\lim_{m,n\to\infty}  [q(\tfrac{t}{n} B_m)r(\tfrac{t}{n} A_m)]^n x = U(t)x.
\end{equation*}
\end{theorem}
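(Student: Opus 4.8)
The plan is to apply the Modified Chernoff--Theorem \ref{thm:mod} with the choice $F_m(h) := q(hB_m)r(hA_m) \in \LLL(X_m)$. The hypotheses \eqref{chernoff2-1} and \eqref{chernoff2-2} are immediate: $F_m(0) = q(0)r(0)I_m = I_m$ since $q(0)=r(0)=1$, and the power bound \eqref{chernoff2-2} is precisely the assumed stability \eqref{stab_approx}. The real work is verifying the consistency hypotheses \eqref{chernoff2-tfh} and \eqref{chernoff2-3}, i.e.\ that the difference quotients converge to the right generator, uniformly in $m$.

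For the consistency I would work on $D := D(A) \cap D(B)$, which by Assumption \ref{gen_ass_abc} is a core for $\overline{A+B}$ (and $(\lambda_0 - \overline{A+B})D$ is dense for $\lambda_0>0$). The key identity is the algebraic decomposition
\begin{align*}
\frac{J_m q(hB_m)r(hA_m)P_m x - J_m P_m x}{h}
&= \frac{J_m q(hB_m)\bigl(r(hA_m) - I_m\bigr)P_m x}{h} \\
&\quad + \frac{J_m \bigl(q(hB_m) - I_m\bigr) P_m x}{h}.
\end{align*}
For the second term, Lemma \ref{lem:key} applied to $B$ (with $q$ in place of $r$) gives that it converges to $J_m B_m P_m x$ uniformly in $m$ for $h\to 0$, and $J_m B_m P_m x \to Bx$ by the consistency Assumption \ref{ass:StabCons}(ii)(b). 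For the first term, I would insert $J_m P_m$: write it as $J_m q(hB_m) P_m \cdot J_m \frac{(r(hA_m)-I_m)}{h} P_m x$ modulo correction terms involving $(J_m P_m - I)$. Here $\|J_m q(hB_m) P_m\| \le K^2 M \ee^{\omega h}$ is bounded uniformly in $m$ and locally in $h$ (using \eqref{stab_approx} with $k=1$ after noting $q(hB_m)=F_m(h)r(hA_m)^{-1}$ is not directly bounded — so instead one should bound $q(hB_m)$ directly from its own stability, or more cleanly bound the whole product $J_m q(hB_m)r(hA_m)P_m$ which is $\le KM\ee^{\omega h}K$); then Lemma \ref{lem:key} applied to $A$ shows $\frac{J_m(r(hA_m)-I_m)}{h}P_m x \to J_m A_m P_m x \to Ax$. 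Combining, the inner limit as $h\to 0$ (which exists uniformly in $m$ by Lemma \ref{lem:key}) yields, after $m\to\infty$, the value $Ax + Bx = \overline{A+B}\,x$ for $x\in D$.

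The main obstacle is the bookkeeping in the first term: one must commute $J_m$ past products of bounded operators at the cost of factors $(I - J_mP_m)$, which tend to $0$ \emph{strongly} (Assumption \ref{ass:JmPm}(ii)) but not in norm. The clean way to handle this is to keep everything inside $J_m(\cdot)P_m$ and use that for a fixed $x\in D(A)$ the element $r(hA_m)P_m x - P_m x$ is $O(h)$ uniformly in $m$ (that is the content of Lemma \ref{lem:key} rewritten), so that applying the uniformly bounded operators $J_m q(hB_m) P_m$ to $J_m^{-1}$-adjusted quantities is harmless; alternatively one notes that $q(hB_m)$ converges strongly (uniformly in the sense of Remark \ref{rem:conv}) to $S(h)$ as a consequence of Theorem \ref{thm:nosplit} applied to $B$. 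Either route gives the uniform-in-$m$ inner limit and the correct outer limit $G = A+B$ on the core $D$, so Theorem \ref{thm:mod} applies and delivers
\begin{equation*}
U(t)x = \lim_{m,n\to\infty} J_m [q(\tfrac tn B_m) r(\tfrac tn A_m)]^n P_m x
\end{equation*}
uniformly on compact $t$-intervals. Finally, since the convergence statement in the theorem is formulated without the $J_m, P_m$ dressing, one reads off the claimed form from \eqref{convergence2} together with Assumption \ref{ass:JmPm}(ii) (replacing $x$ by $P_mx$ is already built into the statement, and $J_m\cdots P_m x \to U(t)x$ is exactly what is asserted).
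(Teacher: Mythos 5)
Your proposal is correct and follows essentially the same route as the paper: choose $F_m(h)=q(hB_m)r(hA_m)$, get stability from \eqref{stab_approx}, and obtain consistency by telescoping the product and applying Lemma \ref{lem:key} to each factor separately. The one point where you hesitate --- the uniform bound on $\|q(hB_m)\|$, which indeed does not follow from \eqref{stab_approx} alone --- is supplied by the resolvent estimates of Assumption \ref{ass:StabCons} via the partial fraction decomposition of $q$, exactly as in Step 3 of the proof of Lemma \ref{lem:key}.
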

\begin{proof}
We apply Theorem \ref{thm:chernoff2} with the choice  $F_m(t) = q(tB_m)r(tA_m)$ for an arbitrarily fixed $t\ge 0$. Since stability is assumed, we only have to check the consistency. To do that, first we have to show that
\begin{equation}\label{eq:toshow}
\lim\limits_{h\to 0}\frac{J_m q(h B_m)r(h A_m)P_mx-J_mP_mx}{h}=J_m(A_m+B_m)P_m x
\end{equation}
for all $x\in D(A+B)$ and uniformly for $m\in\NN$. The left-hand side of \eqref{eq:toshow} can be written as:
\begin{align*}
& \frac{J_m q(h B_m)r(h A_m)P_mx-J_mP_mx}{h} \\
&\quad= J_m q(h B_m)P_m \frac{J_m r(h A_m)P_m x- J_mP_m x}{h} + \frac{J_mq(h B_m)P_m x -J_mP_mx}{h}.
\end{align*}
Since the topology of pointwise convergence on a dense subset of $X$ and the topology of uniform convergence on relatively compact subsets of $X$ coincide on bounded subsets of $\LLL(X)$ (see e.g., Engel and Nagel \cite[Proposition A.3]{Engel-Nagel}), it follows from Lemma \ref{lem:key} that the expression above converges uniformly to $J_m(A_m+B_m)P_m x$. %Finally, condition \eqref{chernoff2-3} follows from Assumption \ref{ass:StabCons}.
\end{proof}

%%%
%%%\begin{remark}
%%%\begin{enumerate}[1.]
%%%\item If  $r$ and $q$ have poles in  $[0,+\infty)$ only, then the additional resolvent estimates over the sector $\Sigma_\varphi$ can be dropped from Theorems \ref{thm:nosplit} and \ref{thm:sq_conv}. In fact, in this case $\varphi=0$ can be taken, and the resolvent bounds follow from the stability part of Assumption \ref{ass:StabCons}. This remark applies to the backward Euler scheme, to Crank-Nicholson scheme etc. For the location of poles of other Pad\'e approximants of the exponential, we refer to e.g.~to \cite{}.
%%%\item If $A_m,A, B_m,B$ are sectorial with a sector $\Sigma_\theta$, $\theta>\pi$ and a bound $M$ that can be chosen uniformly in $m$, then once again the assumptions in Theorems \ref{thm:nosplit} and \ref{thm:sq_conv} concerning the poles  of $r$ and $q$ are automatically satisfied.
%%%\end{enumerate}
%%%\end{remark}
%%
%%For the other two types of splittings we can say the following.
\begin{theorem}
Suppose that the conditions of Theorem \ref{thm:sq_conv} are satisfied, but replace the stability assumption with either
\begin{equation*}
\|[ r(\tfrac{h}{2} A_m)q(hB_m)r(\tfrac{h}{2} A_m)]^k\|\le M\mathrm{e}^{kh\omega} \quad \mbox{for all } h\ge 0,\ k,m\in\mathbb{N}
\end{equation*}
for the Strang splitting, or
\begin{equation*}
\| \left[\Theta  q(hB_m)r(hA_m)+ (1-\Theta) r(hA_m)q(h B_m)\right]^k\|\le Me^{kh\omega}
\end{equation*}
for a $\Theta\in[0,1]$ and for all $h\ge 0,\ k,m\in\mathbb{N}$ in case of the weighted splitting. Then the Strang and weighted splittings, respectively, are convergent, i.e.,
\begin{align*}
\lim_{n,m\to\infty} \left[ r(\tfrac{t}{2n} A_m)q(\tfrac{t}{n} B_m)r(\tfrac{t}{2n} A_m)\right]^n x &= U(t)x \quad \text{(Strang)}, \\
\lim_{n,m\to\infty} \left[\Theta  q(\tfrac{t}{n} B_m)r(\tfrac{t}{n} A_m)+ (1-\Theta) r(\tfrac{t}{n} A_m)q(\tfrac{t}{n} B_m)\right]^n &= U(t)x \quad \text{(weighted)}.
\end{align*}
\end{theorem}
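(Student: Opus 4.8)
The plan is to apply the Modified Chernoff--Theorem \ref{thm:chernoff2} with
\[
F_m^{\mathrm{St}}(h):=r(\tfrac h2 A_m)\,q(hB_m)\,r(\tfrac h2 A_m)
\qquad\text{resp.}\qquad
F_m^{\mathrm{w}}(h):=\Theta\, q(hB_m)r(hA_m)+(1-\Theta)\, r(hA_m)q(hB_m).
\]
In both cases $F_m(0)=I_m$ because $r(0)=q(0)=1$, and the stability estimate \eqref{chernoff2-2} is exactly the replaced hypothesis, so, just as in Theorems \ref{thm:nosplit} and \ref{thm:sq_conv}, everything reduces to the consistency conditions \eqref{chernoff2-tfh}--\eqref{chernoff2-3} on the dense subspace $D:=D(A)\cap D(B)$: one has to show
\[
\frac{J_mF_m(h)P_mx-J_mP_mx}{h}\longrightarrow J_m(A_m+B_m)P_mx\qquad\text{as }h\to0,\text{ uniformly in }m\in\NN,
\]
for every $x\in D$. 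Granting this, $\lim_{m\to\infty}J_m(A_m+B_m)P_mx=(A+B)x$ by the consistency part of Assumption \ref{ass:StabCons}, and since $\overline{A+B}$ generates $(U(t))_{t\ge0}$ and $(\lambda_0-(A+B))D$ is dense, Theorem \ref{thm:chernoff2} gives the claimed convergence, exactly as in the proof of Theorem \ref{thm:sq_conv}.

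\emph{Weighted splitting.} Writing $J_mP_mx=\Theta J_mP_mx+(1-\Theta)J_mP_mx$, the difference quotient for $F_m^{\mathrm w}$ equals
\[
\Theta\cdot\frac{J_m q(hB_m)r(hA_m)P_mx-J_mP_mx}{h}+(1-\Theta)\cdot\frac{J_m r(hA_m)q(hB_m)P_mx-J_mP_mx}{h}.
\]
The first summand was shown in the proof of Theorem \ref{thm:sq_conv} to converge to $J_m(A_m+B_m)P_mx$ uniformly in $m$; the second is its mirror image with the roles of $(A,A_m,r)$ and $(B,B_m,q)$ interchanged. Since that argument uses only Lemma \ref{lem:key} for the two pairs together with the uniform boundedness of $J_mq(hB_m)P_m$ and $J_mr(hA_m)P_m$ --- and not the stability of any particular product --- it applies verbatim, so the second summand has the same limit, and hence so does the convex combination.

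\emph{Strang splitting.} Write $F_m^{\mathrm{St}}(h)=r(\tfrac h2A_m)V_m(h)$ with $V_m(h):=q(hB_m)r(\tfrac h2A_m)$, and insert $P_mJ_m=I_m$ as in the proof of Theorem \ref{thm:sq_conv} to get
\[
\frac{J_mF_m^{\mathrm{St}}(h)P_mx-J_mP_mx}{h}=J_m r(\tfrac h2A_m)P_m\cdot\frac{J_mV_m(h)P_mx-J_mP_mx}{h}+\frac{J_m r(\tfrac h2A_m)P_mx-J_mP_mx}{h}.
\]
The last term tends to $\tfrac12 J_mA_mP_mx$ uniformly in $m$ by Lemma \ref{lem:key} applied with step $h/2$. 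For the quotient built from $V_m(h)$ one repeats the proof of Theorem \ref{thm:sq_conv}, noting that $V_m(h)=q(hB_m)\tilde r(hA_m)$ with the rational function $\tilde r(z):=r(z/2)$ --- still bounded on $\CC_-$, with $\tilde r(0)=1$ --- which yields convergence to $J_m(\tfrac12A_m+B_m)P_mx$ uniformly in $m$. Finally $J_m r(\tfrac h2A_m)P_m$ is uniformly bounded and converges to $J_mP_m$ uniformly in $m$ on relatively compact subsets of $X$ (see below), so it may be pulled through the limit; since $J_mP_m$ fixes the vectors of $\ran J_m$ this produces
\[
\frac{J_mF_m^{\mathrm{St}}(h)P_mx-J_mP_mx}{h}\longrightarrow J_m\bigl(\tfrac12A_m+B_m\bigr)P_mx+\tfrac12 J_mA_mP_mx=J_m(A_m+B_m)P_mx
\]
uniformly in $m$, as needed.

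\emph{The delicate point.} What requires care --- and is the only non-routine step --- is the passage to the limit, uniformly in $m$, inside cross terms of the form $J_m r(hA_m)P_m\cdot w_m(h)$, whose prefactor converges not to an $m$-independent operator but merely to the projection $J_mP_m$. This is handled exactly as in Theorem \ref{thm:sq_conv}: boundedness of $r$, $q$ on $\CC_-$ confines their poles to a sector $\Sigma_\theta$ with $\theta<\tfrac\pi2$, so with the resolvent estimates of Assumption \ref{ass:StabCons} the norms $\|r(hA_m)\|$, $\|q(hB_m)\|$ stay bounded in $h\ge0$, $m\in\NN$; Lemma \ref{lem:key} then gives $J_m r(hA_m)P_m\to J_mP_m$ and $J_m q(hB_m)P_m\to J_mP_m$ pointwise on $D(A)$, $D(B)$ uniformly in $m$, which by equicontinuity upgrades to uniform convergence (still uniformly in $m$) on every relatively compact subset of $X$ --- the fact, already used in Theorem \ref{thm:sq_conv}, that on bounded operator families pointwise convergence on a dense set coincides with uniform convergence on compacta. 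Since the consistency part of Assumption \ref{ass:StabCons} makes $\{J_mA_mP_mx:m\in\NN\}$ and $\{J_mB_mP_mx:m\in\NN\}$, hence also the ranges of the uniformly convergent quotients $w_m(h)$, relatively compact, the splitting $J_m r(hA_m)P_m w_m(h)=J_m r(hA_m)P_m(w_m(h)-\ell_m)+(J_m r(hA_m)P_m-J_mP_m)\ell_m+\ell_m$ with $\ell_m:=\lim_{h\to0}w_m(h)$ lets all three contributions be controlled uniformly in $m$. I expect this compactness bookkeeping, rather than any single estimate, to be the main obstacle.
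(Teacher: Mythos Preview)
Your argument is correct and follows essentially the same route as the paper: apply the Modified Chernoff Theorem with the obvious choices of $F_m$, reduce everything to Lemma~\ref{lem:key}, and pass the uniformly bounded prefactors through the limit via the ``pointwise on a dense set $=$ uniform on compacta'' principle. The only cosmetic difference is in the Strang case: the paper telescopes directly into three quotients with prefactors $J_mr(\tfrac h2 A_m)q(hB_m)P_m$, $J_mr(\tfrac h2 A_m)P_m$, and $I$, whereas you group the inner two factors into $V_m(h)=q(hB_m)r(\tfrac h2 A_m)$ and first run the sequential argument on $V_m$ before peeling off the outer $r(\tfrac h2 A_m)$ --- the same three pieces appear, just nested rather than displayed side by side, and your ``delicate point'' paragraph spells out the compactness bookkeeping that the paper leaves implicit.
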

\begin{proof}
The proof is very similar as it was in the case of the sequential splitting in Theorem \ref{thm:sq_conv}. The only difference occurs in formula \eqref{eq:toshow}. In the case of Strang splitting we take
\begin{equation*}
F_m(t)=r(\tfrac{t}{2n} A_m)q(\tfrac{t}{n} B_m)r(\tfrac{t}{2n} A_m)
\end{equation*}
and write
\begin{align*}
& \frac{J_mr(\tfrac{h}{2} A_m)q(hB_m)r(\tfrac{h}{2} A_m)P_mx-J_mP_mx}{h} \\
&\quad= J_mr(\tfrac{h}{2} A_m)q(hB_m)P_m\frac{J_mr(\tfrac{h}{2} A_m)P_mx-J_mP_mx}{h} \\
&\quad\quad+ J_mr(\tfrac{h}{2} A_m)P_m\frac{J_mq(hB_m)P_mx-J_mP_mx}{h}
+\frac{J_mr(\tfrac{h}{2} A_m)P_mx-J_mP_mx}{h}.
\end{align*}
By Lemma \ref{lem:key} this converges uniformly to
\begin{equation*}
J_m(\tfrac{1}{2}A_m+B_m+\tfrac{1}{2}A_m)P_mx=J_m(A_m+B_m)P_mx.
\end{equation*}
For the weighted splitting we choose
\begin{equation*}
F_m(t)=\Theta q(tB_m)r(tA_m) + (1-\Theta)r(tA_m)q(tB_m),
\end{equation*}
which results in
\begin{align*}
& \frac{J_m [\Theta q(tB_m)r(tA_m) + (1-\Theta)r(tA_m)q(tB_m)] P_mx-J_mP_mx}{h} \\
&\quad= \Theta \frac{J_m q(h B_m)r(h A_m)P_mx-J_mP_mx}{h} \\
&\quad\quad +(1-\Theta)\frac{J_m r(h A_m)q(h B_m)P_mx-J_mP_mx}{h}.
\end{align*}
By using the argumentation for sequential splitting, this converges uniformly (in $m$) to
\begin{equation*}
\Theta J_m(A_m+B_m)P_mx+(1-\Theta)J_m(B_m+A_m)P_mx=J_m(A_m+B_m)P_mx
\end{equation*}
as $h\to 0$.
\end{proof}

% % %

\section*{Proof of Lemma \ref{lem:key}}
The proof consists of three steps, the first being the case of the simplest possible rational approximation, which describes the backward Euler scheme. The next two steps generalize to more complicated cases.

\noindent {\it Step 1.} Consider first the rational function $r(z)=\frac{1}{1-z}$. Then $r(hA_m)=\tfrac{1}{h} R(\tfrac{1}{h},A_m)$ for $h>0$ sufficiently small. Then the left hand side of  \eqref{eq:elso} takes the form
\begin{align*}
&\left\|\tfrac{1}{h}\left(\tfrac{1}{h} J_m R(\tfrac{1}{h},A_m) P_mx - J_mP_mx\right) - J_mA_mP_mx\right\| = \\
&\qquad= \left\|\tfrac{1}{h}\left(\tfrac{1}{h} J_m R(\tfrac{1}{h},A_m) P_mx - J_m \left(\tfrac{1}{h}-A_m\right) R(\tfrac{1}{h},A_m) P_mx \right) - J_mA_mP_mx\right\| = \\
&\qquad= \left\| J_m \left( \tfrac{1}{h}R(\tfrac{1}{h},A_m)-I_m \right) A_mP_mx\right\|.
\end{align*}
For the uniformity of the convergence in $m\in\NN$, take  $x\in D(A)$. Then for $\lambda>0$
\begin{equation*}
\lambda J_m R(\lambda,A_m)P_m x = J_m R(\lambda,A_m)A_m P_m x + J_mP_m x.
\end{equation*}
Hence,
\begin{equation*}
\|\lambda J_m R(\lambda,A_m)P_m x -J_mP_mx\| \leq \|J_mR(\lambda, A_m)P_m\|\cdot \|J_mA_mP_mx\|
\end{equation*}
follows. By the stability in Assumption \ref{ass:StabCons},
\begin{equation*}
\|J_mR(\lambda, A_m)P_m\| \leq \frac{K^2M}{\lambda}\quad \mbox{holds for $\lambda>0$}.
\end{equation*}
Further, by the consistency in Assumption \ref{ass:StabCons} the sequence  $J_mA_mP_mx$ is bounded. Therefore
\begin{equation} \label{eq:lambarezolv}
\lambda J_m R(\lambda,A_m)P_m x \to J_mP_mx
\end{equation}
as $\lambda\to\infty$ uniformly in $m\in\NN$.  Since $\|J_mR(\lambda, A_m)P_m\|$ is uniformly bounded in $m\in\NN$, we obtain by the densness of $P_mD(A)\subset X_m$ that \eqref{eq:lambarezolv} holds even for arbitrary $x\in X$.

Since for $x\in D(A)$, by the consistency in Assumption \ref{ass:StabCons}, the set
\begin{equation*}
\{J_mA_mP_m x\,:\,m\in\NN\}\cup\{Ax\}
\end{equation*}
is compact, for arbitrary $\veps>0$ there is $N\in \NN$ such that the balls $B(J_iA_iP_i x,\veps)$ for $i=1,\dots N$ cover this compact set.
Now let $m\in\NN$ arbitrary and pick $i\leq N$ with $\|J_iA_iP_i x-J_mA_mP_m x\|\leq \veps$. Then we can write
\begin{align*}
&\left\|\tfrac{1}{h}J_mR(\tfrac{1}{h},A_m)A_mP_mx-J_mA_mP_m x\right\|\\
\leq & \left\|\tfrac{1}{h}J_mR(\tfrac{1}{h},A_m)A_mP_mx-J_iA_iP_i x\right\|+\|J_mA_mP_mx-J_iA_iP_i x\|\\
\leq & \left\|\tfrac{1}{h}J_mR(\tfrac{1}{h},A_m)P_m(J_mA_mP_mx-J_iA_iP_ix)\right\|\\
+ & \left\|(\tfrac{1}{h}J_mR(\tfrac{1}{h},A_m)P_m-I_m)J_iA_iP_ix\right\|+\veps\\
\leq & C\veps +\left\|(\tfrac{1}{h}J_mR(\tfrac{1}{h},A_m)P_m-I_m)J_iA_iP_ix\right\|+\veps,
\end{align*}
with an absolute constant $C\geq 0$ being independent on $m\in\NN$ for $h$ sufficiently small. Because of \eqref{eq:lambarezolv}, the term in the middle also converges to 0 as $h\to 0$ (choosing $\lambda=\tfrac{1}{h}$). This proves the validity of \eqref{eq:elso} for our particular choice of the rational function $r$.

\medskip
\noindent {\it Step 2.}
Next, let $k\in \NN$ and $r(z):=\frac{1}{(1-z/k)^k}$. Then $r(0)=1$, $r'(0)=1$ and $r(hA_m)=[\frac kh R(\frac kh,A_m)]^k$. We have to prove
\begin{equation}\label{eq:1}
\tfrac{1}{h}\Bigl[J_m(\tfrac kh R(\tfrac kh,A_m))^kP_mx-J_mP_mx\Bigr]-J_mA_mP_mx\to 0
\end{equation}
uniformly for $m\in\NN$ as $h\to 0$. To do this we shall repeatedly use the following ``trick'': for $y\in D(A_m)$ we have $y=R(\frac kh,A_m)(\frac kh-A_m)y$. Hence we obtain
\begin{align*}
J_mP_m x&=J_m\tfrac kh R(\tfrac kh ,A_m)P_m x- J_m R(\tfrac kh, A_m)A_mP_m x\\
&=\cdots =J_m[\tfrac kh R(\tfrac kh ,A_m)]^kP_m x-\sum_{j=0}^{k-1}J_m[\tfrac kh R(\tfrac kh ,A_m)]^j R(\tfrac kh ,A_m) A_mP_m x.
\end{align*}
By inserting this into the left hand side of \eqref{eq:1} we get
\begin{align}
\notag&\tfrac{1}{h}\Bigl[J_m(\tfrac kh R(\tfrac kh,A_m))^kP_mx-J_mP_mx\Bigr]-J_mA_mP_mx\\
\label{eq:2}&\qquad\qquad=\frac 1k\sum_{j=1}^{k}J_m[\tfrac kh R(\tfrac kh ,A_m)]^j A_mP_m x-J_mA_mP_mx.
\end{align}
By Step 1, we have $J_m \tfrac kh R(\tfrac kh ,A_m)]  A_mP_m x\to J_mA_mP_mx$ uniformly in $m\in\NN$ as $h\to 0$. Now, since $J_m [\tfrac kh R(\tfrac kh ,A_m)]^jP_m$ is uniformly bounded for $m\in\NN$, $1\leq j\leq k$ and $h$ small enough, we also have
that  $J_m[\tfrac kh R(\tfrac kh ,A_m)]^j A_mP_m x\to J_mA_mP_mx$ uniformly in $m\in\NN$ as $h\to 0$ for each $j=1,\dots k$. This shows that the expression in \eqref{eq:2}
 converges to $0$ uniformly in $m\in \NN$.

\medskip
\noindent {\it Step 3.}
To finish the proof for the case of a general rational function
\begin{equation*}
r(z) =  \frac{a_0+a_1z+\ldots +a_kz^k}{b_0+b_1z+\ldots +b_nz^n}
\end{equation*}
we use the partial fraction decomposition, i.e., we write
\begin{equation*}
r(z)=\sum_{i=1}^l\sum_{j=1}^{\nu_i}\frac{C_{ij}}{(1-z/{\lambda_i})^j},
\end{equation*}
with some uniquely determined $C_{ij}\in\CC$.  Since, by assumption, $r(0)=1$ and $r'(0)=1$, we obtain
\begin{equation}\label{eq:cij}
\sum_{i=1}^l\sum_{j=1}^{\nu_i}C_{ij}=1,\quad \mbox{and}\quad \sum_{i=1}^l\sum_{j=1}^{\nu_i}\frac{j}{\lambda_i}C_{ij}=1.
\end{equation}
Since $r$ is bounded on the left half-plane, we have that the poles $\lambda_i$ of $r$ have positive real part, $\Re\lambda_i>0$.
For $j=1,\dots,\nu_i,\:i=1,\dots,l$ consider the rational functions
$$
r_{ij}(z):=\frac{1}{(1-z/j)^j},
$$
and the operators $A_{ij,m}:=\frac{j}{\lambda_i}A_m$. Then $$r(z)=\sum_{i=1}^l\sum_{j=1}^{\nu_i}C_{ij} r_{ij}(\tfrac{j}{\lambda_i}z).$$
We shall apply Step 2 to these rational functions and to these operators. To do that we have to check if the required assumptions are satisfied. Obviously, $r_{ij}$ have the  properties needed. The consistency part of Assumption \ref{ass:StabCons} is trivially satisfied for $A_{ij,m}$, $P_m$ and $J_m$, $m\in \NN$. The uniform boundedness of $\lambda R(\lambda,A_{ij,m})$  follows from the stability Assumption \ref{ass:StabCons}.(a). Indeed, that condition implies
$$
\|\lambda R(\lambda,A_m\|\leq M_\phi\quad\mbox{for all $\lambda\in\Sigma_\phi$},
$$
where $\Sigma_\phi$ is any sector with opening half-angle  $\phi\in[0,\frac\pi2)$. If $\lambda_i\in\Sigma_\varphi$ is a pole of $r$ then $\tfrac{\lambda_i\lambda}{j}\in \Sigma_{\theta}$ for $\lambda>0$. So have
$$
\|\lambda R(\lambda,A_{ij,m})\|=\|\lambda R(\lambda,\tfrac{j}{\lambda_i}A_m)\|=\|\tfrac{\lambda\lambda_i}{j} R(\tfrac{\lambda\lambda_i}{j},A_m)\|\leq M_\theta\quad\mbox{for all $\lambda>0$.}
$$
By Step 2, we have that
$$
\tfrac{1}{h}\Bigl[J_m\bigl(\tfrac jh R(\tfrac jh,A_{ij,m})\bigr )^jP_mx-J_mP_mx\Bigr]-J_m A_{ij,m} P_mx\to 0
$$
uniformly in $m\in \NN$ as $h\to 0$. By taking also the equalities \eqref{eq:cij} into account this yields
\begin{align*}
&\frac{1}{h}\Bigl[J_mr(hA_m)P_mx-J_mP_mx\Bigr]-J_mA_mP_mx\\
&\quad =\frac{1}{h} \Bigl[\sum_{i=1}^l\sum_{j=1}^{\nu_i} C_{ij}\Bigl(J_m r_{ij}(h \tfrac{j}{\lambda_i}A)P_mx- J_m P_mx\Bigr) \Bigr]-\sum_{i=1}^l\sum_{j=1}^{\nu_i} C_{ij} J_m A_{ij,m}P_mx\\
&\quad =\sum_{i=1}^l\sum_{j=1}^{\nu_i} C_{ij}\Bigl[\frac{1}{h}\Bigl(J_m r_{ij}(h \tfrac{j}{\lambda_i}A)P_mx- J_m P_mx\Bigr) -J_m A_{ij,m}P_mx\Bigr]\to 0
\end{align*}
uniformly in $m\in \NN$ as $h\to 0$. This finishes the proof.
\qed\medskip

Finally we remark that in the present paper we only treated an autonomous evolution equation \eqref{acpspl}. In the case of time-dependent operators $A(t)$ and $B(t)$ we have already shown the convergence in \cite{Batkai-Csomos-Farkas-Nickel} for numerical methods applying splitting and spatial discretization together. The extension of our present results concerning the application of an approximation in time as well, will be the subject of forthcoming work.

%%%%%%%%%%%%%%%%%%%%%%%%%%%%%%%%%%%%%%%%%%%%

\section*{Acknowledgments}
A.~B\'atkai was supported by the Alexander von Humboldt-Stiftung and by the OTKA grant Nr. K81403. The European Union and the European Social Fund have provided financial support to the project under the grant agreement no. T\'{A}MOP-4.2.1/B-09/1/KMR-2010-0003.

%%%%%%%%%%%%%%%%%%%%%%%%%%%%%%%%%%%%%%%%%%%%%%%%%%%%%%%%%%%%%%%%

\bibliographystyle{plain}

%%%%%%%%%%%%%%%%%%%%%%%%%%%%%%%%%%%%%%%%%%%%%%%%%%%%%%%

\end{document}